\newtheorem{thm}{Theorem}[section]
\newtheorem{rem}[thm]{Remark}}
\newcommand{\abs}[1]{\lvert#1\rvert}
\newcommand{\ra}{\rightarrow}
\newcommand{\dis}{\displaystyle}
\def\R{\mathbf R}
\def\N{\mathbf N}
\def\d{\text{\rm{d}}}
\def\E{\mathbf E}
\def\p{\mathbf P}
\newcommand{\tim}[2]{#1\leq\! s\!<\!#2}
\newcommand{\low}[1]{\!-\!(cs\!+\!#1)}
\newcommand{\upb}[1]{as\!+\!#1}
\newcommand{\fin}{\hspace*{\fill}\rule{0.3em}{1ex}}
\newenvironment{proof}{{\bf \noindent Proof.}}{\fin}
\numberwithin{equation}{section}
\begin{document}

\title{Nonlinear boundary crossing probabilities of Brownian motion with random jumps}

\author{Jinghai Shao ${}^a$ and Liqun Wang ${}^b$\thanks{Corresponding author}\\
{\small $a$: School of Mathematical Sciences, Beijing Normal University, Beijing, China}\\
{\small E-mail: shaojh@bnu.edu.cn}\\
{\small $b$:  Department of Statistics, University of Manitoba, Winnipeg, Manitoba, Canada R3T 2N2}\\
{\small E-mail: liqun\_wang@umanitoba.ca}
}


\maketitle


\begin{abstract}
We derive explicit formulas for probabilities of Brownian motion with jumps crossing linear or piecewise linear boundaries in any finite interval. We then use these formulas to approximate the boundary crossing probabilities for general nonlinear boundaries. The jump process can be any integer-valued process and jump sizes can have general distributions. Moreover, the jump sizes can be even correlated and/or non-identically distributed. The numerical algorithm is straightforward and easy to implement. Some numerical examples are presented.
\end{abstract}

\paragraph{Keywords:}
Boundary crossing probability; Brownian motion; Jump-diffusion process;
First hitting time; First passage time; Piecewise linear boundary.

\paragraph{AMS 2010 Subject Classification:}
Primary 60J65, 60J75, Secondary 60J60, 60J70

\section{Introduction}

Recently the first passage time (FPT) of jump-diffusion processes has drawn much attention in the literature, mainly due to its applications in finance and insurance. Some recent applications can be found in e.g. Cont and Tankov (2004), Kou and Wang (2004), Jiang and Pistorius (2008), Cai et al (2009), Chiarella and Ziogas (2009), Kudryavtsev and Levendorskii (2009), Bakshi and Panayotov (2010), Jeannin and Pistorisus (2010), Chi and Lin (2011), and Dong et al (2011). Given the important role of the FPT in many applications, however, the computation of the FPT densities for jump-diffusion processes turns out to be very challenging and so far very few exact solutions exist. In a remarkable work, Kou and Wang (2003) obtained the explicit form of the Laplace transform of the FPT for a Brownian motion (BM) with double-exponentially distributed jumps crossing a constant boundary. Dong et al (2011) obtained similar results for hyper-exponential jumps. Perry et al (2004) established the integral equation for the FPT density for BM with general jump sizes. In these proposed methods, the FPT density or probability have to be calculated through numerical Laplace inversion. More recently, Giraudo (2009) proposed an approximate solution for the FPT density for two-constant jump sizes. So far, no explicit form of the FPT distribution exists even for a constant boundary, let alone for general time-dependent boundaries.

The calculation of the FPT distribution for diffusion processes is a long-standing problem. It is well-known that explicit formulas exist only for some special processes and boundaries. For general problems, there are mainly two analytic approaches. In the first approach, one establishes a certain integral or differential equation for the FPT density, and then solves this equation numerically to obtain an approximate solution. For example, the tangent approximation and other image methods are used by Strassen (1967), Daniels
(1969, 1996), Ferebee (1982) and Lerche (1986), while a series expansion method is used by Durbin (1971, 1992), Ferebee (1983), Ricciardi et al. (1984), Giorno et al. (1989), and Sacerdote and Tomassetti (1996).

In the second approach, one first establishes an explicit formula for piecewise linear boundary crossing probabilities (BCP), and then uses this formula to approximate the BCP for general nonlinear boundaries. This method was first proposed by Wang and P\"otzelberger (1997) for one-boundary problem and later extended to two-boundary problems by Novikov, Frishling and Kordzakhia (1999) and P\"otzelberger and Wang (2001). It has also been used by Novikov, Frishling and Kordzakhia (2003), Borovkov and Novikov (2005), and Downes and Borovkov (2008). One advantage of this approach is that the numerical computation is straightforward and the approximation error can be assessed and  controlled. Moreover, this approach is easy to extend to deal with jump-diffusion processes, as is shown in this paper.

Specifically, let $(\Omega,(\mathcal F_t)_{t\geq 0}, \p)$ be a filtered probability space and $(B_t)_{t\geq 0}$ an $\mathcal F_t$-Brownian motion.  Further, let $(N_t)_{t\geq 0}$ be a Poisson point process with rate $\lambda$ and $(\eta_i)_{i\in \N}$ a family of random variables representing jump sizes. Suppose that $\big((B_t),\,(N_t),\, (\eta_i)\big)$ are mutually independent. Then we consider the boundary crossing probability for the process
\begin{equation}\label{1.1}
X_t = B_t + \sum_{i=1}^{N_t} \eta_i,
\end{equation}
where the second part of $X_t$ is a so-called compound Poisson process. In this paper we first use the BCP approach of Wang and P\"otzelberger (1997) and P\"otzelberger and Wang (2001) to derive explicit formulas for the BCP of $X_t$ crossing one- and two-sided linear boundaries respectively. Then we extend these formulas to piecewise linear boundaries and use them to approximate the BCP of $X_t$ for general nonlinear boundaries. One advantage of this method is that it can also deal with more general jump component in $X_t$. For instance,  $N_t$ can be any integer-valued stochastic process starting with $N_0=0$. Moreover, the jumps $(\eta_i)_{i\in \N}$ are not necessarily $i.i.d.$ as long as the joint distribution of $(\eta_1, \eta_2,\ldots,\eta_k)$ is known for each $k\in \N$. This provides much more flexible models for real applications where the subsequent jumps are allowed to be dependent or even follow different distributions. Finally, although in this paper we deal with processes of the form $(\ref{1.1})$ explicitly, our method can be easily extended to processes of the form $X_t = \mu(t) + \sigma B_t + \sum_{i=1}^{N_t} \eta_i$, because the drift $\mu(t)$ and diffusion parameter $\sigma$ can be absorbed into the boundaries.

This paper is organized as follows. In section 2 we derive explicit formulas for the BCP for one- and two-sided linear boundaries respectively. Then in section 3 we derive the formulas for piecewise linear boundaries and use them to approximate the BCP for general nonlinear boundaries. Finally, some numerical examples are given in section 4 and conclusions are in section 5.

\section{Explicit formulas for linear boundaries}

In this section we first consider the linear boundary crossing probability
\begin{equation}\label{2.1}
\p(\max_{0\leq s\leq t}\{ X_s-as-b\} < 0),
\end{equation}
where $a$ and $b> 0$ are constants. To simplify notation,  throughout the paper we denote $\E[ A]=\E[\mathbf{1}_A]=\p(A)$ for any measurable set $A$. Further, let
$$\mathcal F_N^t=\sigma\big(N_s,\,0\leq s<t\big),\ \mathcal F_B^t=\sigma\big(B_s,\,0\leq s<t\big).$$
A jump time is defined to be the time $t>0$ such that $\dis X_t\neq X_{t^-}$, where $X_{t^-}=\lim_{s\ra t^-} X_s$. Subsequently, we denote a jump time by $\tau$ and the corresponding jump height by $\eta=X(\tau)-X(\tau^-)$. Thus, for each $k\in \N$, the first $k$ jump times are
\begin{align*}
\tau_1&=\inf\{t> 0;\ N_t\neq N_{t^-}\},\ \\
\tau_2&=\inf\{t>\tau_1;\ N_t\neq N_{t^-}\},\\
&\!\ldots\ldots\ldots\ldots\\
\tau_k&=\inf\{t>\tau_{k-1};\ N_t\neq N_{t^-}\}.
\end{align*}
Here similarly $N_{t^-}:=\lim_{s\ra t^-} N_s$. For each $k\in \N$, let $F_k(t_1,\ldots,t_k)$ be the joint distribution function of the jump times $(\tau_1,\ldots, \tau_k)$ and $G_k(h_1,\ldots,h_k)$ be the joint distribution function of the corresponding jump heights $(\eta_1,\eta_2,\ldots,\eta_k)$. For each $t>0$, the support of $F_k$ is $\mathcal S_k^t=\big\{(s_1,\ldots,s_k);\ 0<s_1<\ldots<s_k<t\big\}$. Further, denote the distribution of $B_t$ as
\[
\Phi_t(x)= \frac{1}{\sqrt{2\pi t}}\int_{-\infty}^x e^{-\frac{y^2}{2t}}\,\d y.
\]
Then we have the following results.

\begin{thm}[One-sided linear BCP]
For any constants $a$ and $b>0$, it holds
\begin{equation}\label{2.2}
\begin{split}
&\ \p(\max_{0\leq s< t} \{X_s-as-b\} < 0)
= \sum_{k=0}^{\infty} \p(N_t=k)\int_{\mathcal S_k^t} \d F_k(t_1, t_1+t_2,\ldots, \sum_{i=1}^k t_i)\\
&\quad \cdot\int_{\R^k}\d G_k(h_1\cdots h_k)\int_{-\infty}^{\beta_1}\cdots\int_{-\infty}^{\beta_k}
\prod_{i=1}^k\big(1-e^{-2\frac{b_i(at_i+b_i-x_i)}{t_i}}\big)\\
&\quad\cdot \Big(\Phi\big(a\sqrt{t_{k+1}}+\frac {b_{k+1}}{\sqrt{t_{k+1}}}\big) -e^{-2ab_{k+1}}\Phi\big(a
\sqrt{t_{k+1}}-\frac{ b_{k+1}}{\sqrt{t_{k+1}}}\big)\Big)\,\d\Phi_{t_1}(x_1)\ldots\d\Phi_{t_k}(x_k),
\end{split}
\end{equation}
where $b_1= b, b_{i+1}=b_i+at_i-x_i-h_i, i=1,\,\ldots,k; \beta_i=\min\{at_i+b_i,a t_i+b_i-h_i\}, \ i=1,\ldots, k$; $t_{k+1}=t-\sum_{i=1}^k t_i>0$, and $\Phi(x):=\Phi_1(x)$. Further, if $N_t$ is a Poisson process with rate $\lambda$ and $(\eta_i)$ are $i.i.d.$ with distribution $G$, then
\begin{equation}\label{2.3}
\begin{split}
&\ \p(\max_{0\leq s< t}\big\{ X_s-as-b\big\} < 0)
= \sum_{k=0}^{\infty}  \lambda^ke^{-\lambda t}\int_0^t\d t_1\int_0^{t-t_1}\d t_2\cdots\int_0^{t-\sum_{i=1}^{k-1} t_i}\d t_k \\
&\quad \cdot\int_{\R^k}\d G(h_1)\cdots\d G(h_k) \int_{-\infty}^{\beta_1}\cdots\int_{-\infty}^{\beta_k}
\prod_{i=1}^k\Big(1-e^{-2\frac{b_i(at_i+b_i-x_i)}{t_i}}\Big)\\
&\quad\cdot \Big(\Phi\big(a\sqrt{t_{k+1}}+\frac{ b_{k+1}}{\sqrt{t_{k+1}}}\big) -e^{-2ab_{k+1}}\Phi\big(a
\sqrt{t_{k+1}}-\frac{ b_{k+1}}{\sqrt{t_{k+1}}}\big)\Big)\,\d\Phi_{t_1}(x_1)\ldots\d\Phi_{t_k}(x_k).
\end{split}
\end{equation}
\end{thm}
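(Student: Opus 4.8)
The plan is to condition successively on the jump structure and then use the classical reflection-principle formula for a Brownian motion crossing a \emph{linear} boundary on an interval where no jump occurs. First I would condition on $\{N_t=k\}$ and, given this event, on the realized jump times $(\tau_1,\ldots,\tau_k)=(s_1,\ldots,s_k)$ (writing $t_i=s_i-s_{i-1}$ with $s_0=0$) and on the jump heights $(\eta_1,\ldots,\eta_k)=(h_1,\ldots,h_k)$. This accounts for the outer sum $\sum_{k}\p(N_t=k)$ and the integrals $\d F_k$ and $\d G_k$ in \eqref{2.2}. On each time interval $[s_{i-1},s_i)$ the process $X$ behaves exactly like a Brownian motion (started from its current level), so the event $\{\max_{0\le s<t}\{X_s-as-b\}<0\}$ splits, by the strong Markov property, into the intersection over $i=1,\ldots,k+1$ of the events that the Brownian pieces stay below the appropriate linear boundary on each subinterval.

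The next step is to track the boundary gap at the left endpoint of each subinterval. On $[0,t_1)$ the gap is $b_1=b$; the Brownian increment over that interval is $x_1$, and the deterministic drift in the boundary contributes $at_1$, so the gap just before the first jump is $b_1+at_1-x_1$, and immediately after the jump of size $h_1$ it becomes $b_2=b_1+at_1-x_1-h_1$; iterating gives the recursion $b_{i+1}=b_i+at_i-x_i-h_i$ stated in the theorem. Conditioning further on the increments $x_i=B_{s_i^-}-B_{s_{i-1}}$ and integrating them out with $\d\Phi_{t_i}(x_i)$, I would use the standard fact that for a Brownian bridge on $[0,t_i]$ from $0$ to $x_i$ the probability of staying below the line $s\mapsto at_i\cdot(s/t_i)\cdot\ldots$ — more precisely, the probability that a Brownian motion on $[0,t_i]$ with endpoint value $x_i$ stays below the linear boundary connecting gap $b_i$ at the left to gap $b_i+at_i-x_i$ at the right — equals $1-\exp\!\big(-2b_i(at_i+b_i-x_i)/t_i\big)$; this is exactly the reflection/Girsanov identity underlying Wang and P\"otzelberger (1997). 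The constraint that both the pre-jump and post-jump positions lie below the boundary forces $x_i<at_i+b_i$ and $x_i<at_i+b_i-h_i$, i.e.\ $x_i<\beta_i$, which is why the $x_i$-integral runs up to $\beta_i$. On the final interval $[s_k,t)$ of length $t_{k+1}$ there is no further jump and no conditioning on the endpoint, so the relevant quantity is the unconditional probability that a Brownian motion stays below a line with left gap $b_{k+1}$ and slope $a$, which by the reflection principle is $\Phi\big(a\sqrt{t_{k+1}}+b_{k+1}/\sqrt{t_{k+1}}\big)-e^{-2ab_{k+1}}\Phi\big(a\sqrt{t_{k+1}}-b_{k+1}/\sqrt{t_{k+1}}\big)$; this produces the last factor in \eqref{2.2}. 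Multiplying all these conditional probabilities and integrating out in the order $h$'s, then $x$'s, then jump times, then summing over $k$, yields \eqref{2.2}. Formula \eqref{2.3} is then the specialization: for a rate-$\lambda$ Poisson process $\p(N_t=k)=e^{-\lambda t}(\lambda t)^k/k!$ and, conditionally on $N_t=k$, the jump times are distributed as the order statistics of $k$ uniforms on $[0,t]$, whose joint density on $\mathcal S_k^t$ is $k!/t^k$; the $k!$ cancels and the density $t^{-k}$ is absorbed, leaving the iterated Lebesgue integral $\int_0^t\d t_1\cdots\int_0^{t-\sum_{i=1}^{k-1}t_i}\d t_k$, while $\d G_k$ factors as $\d G(h_1)\cdots\d G(h_k)$ by the i.i.d.\ assumption, with $\lambda^k e^{-\lambda t}$ in front.

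The main obstacle I anticipate is making the decomposition of the crossing event fully rigorous: one must justify that conditioning on the jump times and heights really does reduce the problem to independent Brownian pieces (using the independence of $(B_t)$, $(N_t)$, $(\eta_i)$ together with the strong Markov property at each $\tau_i$), and one must handle the measure-zero boundary cases (a jump landing exactly on the boundary, the degenerate case $t_{k+1}=0$) and verify that the two-sided constraint at each jump is correctly encoded by the single upper limit $\beta_i$ rather than by two separate restrictions — the point being that $x_i<\beta_i$ captures both $X_{s_i^-}<$ boundary and $X_{s_i}<$ boundary simultaneously, and the subsequent pieces then automatically start from a strictly positive gap $b_{i+1}>0$ on the region of integration. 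A secondary technical point is the interchange of the infinite sum over $k$ with the integrals, which is justified since every term is nonnegative and the sum of $\p(N_t=k)$ is one, so monotone convergence applies.
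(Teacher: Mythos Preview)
Your proposal is correct and follows essentially the same route as the paper: condition on $\{N_t=k\}$, the jump times, and the jump heights, then use the Markov property together with the classical conditional formula $\p(\max_{0\le s<t}\{B_s-as-b\}\ge 0\mid B_t=x)=e^{-2b(at+b-x)/t}$ on each inter-jump interval and the unconditional formula on the last interval. In fact you supply more justification than the paper does (the reason for the upper limit $\beta_i$, the Poisson specialization via uniform order statistics, and the interchange of sum and integral), so there is nothing to add.
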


\begin{proof}
First, since $(B_t), (N_t)$ and $(\eta_i)$ are independent, we have
\begin{equation}\label{2.4}
\begin{split}
&\p\big(\max_{0\leq s< t}\{ X_s-as -b\}< 0\big)\\
&=\p\Big(\max_{0\leq s< t}\big\{B_s+\sum_{i=1}^{N_s}\eta_i-as-b\big\} < 0\Big)\\ 
&=\sum_{k=0}^\infty \p(N_t=k)\int_{\mathcal S_k^t} \d F_k\Big(t_1,\ldots,\sum_{i=1}^k t_i\Big)\int_{\R^k}\d G(h_1)\cdots\d G(h_k) \\ 
&\quad\cdot
\p\Big(\max_{0\leq s<t_1}\!\big\{ B_s\!-\!as\!-\!b\big\}< 0,\ldots,\max_{\sum_{i=1}^{k} t_i\leq s<t}\!\big\{ B_s\!-\!as\!-\!b\!+\!\sum_{i=1}^k \!h_i\big\}< 0\Big).
\end{split}
\end{equation}
In order to calculate the term in the integration, we recall the following well-known results on Brownian motion (Siegmund, 1986, P.\, 375, G. Deelstra,1994):
\begin{gather*}
\p\big(\max_{0\leq s<t} \big\{B_s-as-b \big\}\geq 0\big|B_t=x\big)=e^{\frac{-2b(at+b-x)}{t}}, \ \text{for}\ x<at+b,\\
\p(\max_{0\leq s<t}\big\{ B_s-as-b\big\}\geq 0)=1-\Phi(a\sqrt{t}+\frac b{\sqrt t})+e^{-2ab}\Phi(a\sqrt{t}-\frac b{\sqrt{t}}).
\end{gather*}
Then we have {\small
\begin{align*}
&\p\Big(\max_{0\leq s  <t_1}\!\big\{ B_s\!-\!as\!-\!b\big\}\!<\! 0,\ldots,\max_{\sum_{1}^{k} t_i\leq s<t}\!\big\{ B_s\!-\!as\!-\!b\!+\!\sum_{i=1}^k \!h_i\big\}\!<\! 0\Big)\\
&=\int_{-\infty}^{\beta_1}\p\big(\max_{0\leq s<t_1}\big\{ B_s-as-b\big\}< 0\big|B_{t_1}=x_1\big)\\
&\ \cdot\p\big(\max_{t_1\leq s<t_1+t_2}\!\!\big\{B_s\!-\!as\!-\!b\!+\!h_1\big\}< 0,\ldots,\!
\max_{\sum_{1}^k t_i\leq s<t}\!\big\{ B_s\!-\!as\!-\!b\!+\!\sum_{i=1}^k h_i\big\}\!<\! 0\Big|B_{t_1}=x_1\big)\,\d\Phi_{t_1}(x_1)\\
&=\int_{-\infty}^{\beta_1}\big(1- e^{-\frac{2b(at_1+b-x_1)}{t_1}}\big)\p\Big( \max_{0\leq s<t_2} \big\{B_s-as-b_2\big\}\!<\! 0,\ldots,\\
&\hspace{4cm} \max_{\sum_{i=2}^k t_i\leq s<t-t_1} \big\{B_s-as-b_2+\sum_{i=2}^k h_i\big\}\!<\! 0\Big)\,\d\Phi_{t_1}(x_1).
\end{align*}}
Iterating this calculation process yields
\begin{equation}\label{2.5}
\begin{split}
&\p\Big(\max_{0\leq s  <t_1}\!\big\{ B_s\!-\!as\!-\!b\big\}\!<\! 0,\ldots,\max_{\sum_{i=1}^{k}\!\! t_i\leq s<t}\!\big\{ B_s\!-\!as\!-\!b\!+\!\sum_{i=1}^k \!h_i\big\}\!<\! 0\Big)\\
& =\int_{-\infty}^{\beta_1}\cdots\int_{-\infty}^{\beta_k}
\prod_{i=1}^k\big(1-e^{-2\frac{b_i(at_i+b_i-x_i)}{t_i}}\big)
\cdot \Big(\Phi\big(a\sqrt{t_{k+1}}+\frac{ b_{k+1}}{\sqrt{t_{k+1}}}\big)\\ & \qquad -e^{-2ab_{k+1}}\Phi\big(a
\sqrt{t_{k+1}}-\frac {b_{k+1}}{\sqrt{t_{k+1}}}\big)\Big)\,\d\Phi_{t_1}(x_1)\cdots\d\Phi_{t_k}(x_k).
\end{split}
\end{equation}
Substituting (\ref{2.5}) into (\ref{2.4}) we obtain the desired result (2.2).
\end{proof}

\begin{rem}
Formulas (\ref{2.2}) and (\ref{2.3}) are infinite sums due to the number of jumps of the process $(X_s, s\geq 0)$ in interval $[0,t)$. In practical calculations they have to be truncated at finite terms. From (\ref{2.4}) it is easy to see that the truncation error at item $k=n$ is bounded by $\sum_{k=n}^\infty \p(N_t=k)$, which is easily controlled by the behaviour of the process $N_t$. For example, if $N_t$ is a Poisson process, then
$$\sum_{k=n}^\infty \p(N_t=k)=\sum_{k=n}^\infty\frac{(\lambda t)^k}{k!}e^{-\lambda t}\leq \frac{(\lambda t)^{n+1}}{(n+1)!}.$$
Therefore, in practice one can determine a number $n$ such that $\frac{(\lambda t)^{n+1}}{(n+1)!}$ is small enough, and then sum up the terms up to $n-1$. Moreover, unlike in other methods such as by Kou and Wang (2003), controlling the truncation error in this way does not depend on the distribution of jump sizes $\eta_i$. In other words, the distribution of $\eta_i$ does not impact the accuracy of (\ref{2.2}) and (\ref{2.3}) in real calculation.
\end{rem}

Our second main result is based on the well-known formulas of Anderson (1960) \cite[(4.24) (4.32)]{An}, that for any $t>0$ and $x<\delta_1 t+\gamma_1$,
\begin{align}\label{theta}\notag
&\vartheta(\gamma_1,\delta_1,\gamma_2,\delta_2; t,x) :=\p\big(\exists\, s<t, B_s\geq \delta_1s+\gamma_1, B_u> \delta_2 u+\gamma_2,\forall\, u\in[0,s]\big|B_t=x\big)\\ \notag
&=\sum_{r=1}^\infty\big\{e^{-(2/t)[r^2\gamma_1(\gamma_1+\delta_1t-x)+(r-1)^2\gamma_2(\gamma_2+\delta_2 t-x)-r(r-1)\{\gamma_1(\gamma_2+\delta_2 t-x)+\gamma_2(\gamma_1+\delta_1 t-x)\}]}\\
&\quad -e^{-(2/t)[r^2\{\gamma_1(\gamma_1+\delta_1t-x)+\gamma_2(\gamma_2+\delta_2 t-x)\}-r(r-1)\gamma_1(\gamma_2+\delta_2 t-x)-r(r-1)\gamma_2(\gamma_1+\delta_1 t-x)]}\big\}.
\end{align}
and
\begin{align}\label{chi} \notag
&\chi(\gamma_1,\delta_1,\gamma_2,\delta_2;t) :=\p\big(\exists\, s<t, B_s\geq \delta_1s+\gamma_1, B_u> \delta_2 u+\gamma_2,\forall\, u\in[0,s]\big)\\ \notag
&=1-\Phi\Big(\frac{\delta_1 t+\gamma_1}{\sqrt t}\Big)\\ \notag
&\quad +\sum_{r=1}^\infty\Big\{e^{-2[r\gamma_1-(r-1)\gamma_2][r\delta_1-(r-1)\delta_2]}\Phi\Big(\frac{\delta_1 t+2(r-1)\gamma_2-(2r-1)\gamma_1}{\sqrt t}\Big)\\ \notag
&\quad-e^{-2[r^2(\gamma_1\delta_1+\gamma_2\delta_2)-r(r-1)\gamma_1\delta_2-r(r-1)\delta_2]}\Phi\Big(\frac{\delta_1 t+2r\gamma_2-(2r-1)\gamma_1}{\sqrt t}\Big)\\ \notag
&\quad - e^{-2[(r-1)\gamma_1-r\gamma_2][(r-1)\delta_1-r\delta_2]}\Big[1-\Phi\Big(\frac{\delta_1 t-2r\gamma_2+(2r-1)\gamma_1}{\sqrt t}\Big)\Big]\\
&\quad +e^{-2[r^2(\gamma_1\delta_1+\gamma_2\delta_2)-r(r-1)\gamma_2\delta_1-r(r+1)\gamma_1\delta_2]}
\Big[1-\Phi\Big(\frac{\delta_1 t+(2r+1)\gamma_1-2r\gamma_2}{\sqrt t}\Big)\Big]\Big\}.
\end{align}

Then we have the following results.

\newpage

\begin{thm}[Two-sided linear BCP]
For any $a,\, c\in \R$ and $b,\, d>0$ such that $at+b>-(ct+d)$ for $t>0$, it holds
\begin{align}\label{prep}\notag
&\p(-(cs+d)<X_s<as +b, \tim{0}{t})\\ \notag
&=\sum_{k=0}^{\infty} \p(N_t=k)\int_{\mathcal S_k^t} \d F_k\Big(t_1,\cdots,\sum_{i=1}^k t_i\Big)\int_{\R^k}\d G( h_1,\ldots, h_k)\\ \notag
&=\int_{\alpha_1}^{\beta_1}\!\!\d\Phi_{t_1}(x_1)\cdots\int_{\alpha_k}^{\beta_k}\!\!\d\Phi_{t_k}(x_k)
\prod_{i=1}^{k}\Big(1-\vartheta(b_i,a,-d_i,c|t_i,x_i)-\vartheta(d_i,c,-b_i,-a|t_i,-x_i)\Big)\\
&\quad \cdot \Big(1-\chi\big(b_{k+1}, a,-d_{k+1},-c\big|t-\sum_{i=1}^k t_i\big)-\chi\big(d_{k+1},c,-b_{k+1},-a\big|t-\sum_{i=1}^k t_i\big)\Big),
\end{align}
where
\begin{gather*}
b_1=b,  b_{i+1}=at_i+b_i-x_i-h_i, i=1,\ldots,k;\\
\beta_i=\min\{at_i+b_i, at_i+b_i-h_i\}, i=1,\ldots, k;\\
d_1=d; d_{i+1}=ct_i+d_i+x_i+h_i,\ i=1,\ldots, k;\\
\alpha_i=\min\{ct_i+d_i, ct_i+d_i+h_i\},\ i=1,\ldots, k.
\end{gather*}
\end{thm}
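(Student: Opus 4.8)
The plan is to follow the conditioning argument used in the proof of Theorem 2.1, with the one-barrier reflection formulas used there replaced by Anderson's two-barrier formulas $(\ref{theta})$ and $(\ref{chi})$. First I would condition successively on the number of jumps $N_t=k$, on the jump times $(\tau_1,\ldots,\tau_k)$, and on the jump heights $(\eta_1,\ldots,\eta_k)$; since $(B_t)$, $(N_t)$ and $(\eta_i)$ are mutually independent, this conditioning does not alter the law of $(B_s)_{0\le s<t}$, so exactly as in $(\ref{2.4})$ the left-hand side of $(\ref{prep})$ becomes
\[
\sum_{k=0}^\infty \p(N_t=k)\int_{\mathcal S_k^t}\d F_k\Big(t_1,\ldots,\sum_{i=1}^k t_i\Big)\int_{\R^k}\d G_k(h_1,\ldots,h_k)\,Q_k,
\]
where $Q_k$ (a function of the $t_i$ and $h_i$) is the probability that the Brownian path, translated on the $i$-th inter-jump interval by the accumulated jump $h_1+\cdots+h_{i-1}$, stays strictly between the lines $as+b$ and $-(cs+d)$ on all of $[0,t)$. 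Everything then reduces to computing $Q_k$; the hypothesis $at+b>-(ct+d)$ for $t>0$ is precisely what keeps this strip nonempty, which is needed for Anderson's formulas to apply.

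To compute $Q_k$, write $t_i=\tau_i-\tau_{i-1}$ with $\tau_0=0$ and use the independence of Brownian increments together with the Markov property of $B$ at the (now fixed) times $\tau_1<\cdots<\tau_k$, conditioning in addition on the increment $x_i$ of $B$ over the $i$-th interval. The $i$-th interval then contributes a Brownian bridge of length $t_i$ from $0$ to $x_i$ that must stay below a line of slope $a$ through height $b_i$ and above a line of slope $-c$ through height $-d_i$, where $b_i$, $d_i$ are the distances to the two boundaries at the left endpoint of interval $i$; the recursions $b_{i+1}=at_i+b_i-x_i-h_i$ and $d_{i+1}=ct_i+d_i+x_i+h_i$ just record how each distance changes under the boundary's linear drift over time $t_i$, under the Brownian displacement $x_i$, and under the jump $h_i$. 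For such a bridge the exit event is the disjoint union of ``hits the upper line first'' and ``hits the lower line first'' (a continuous path that leaves the strip must touch one of the barriers, and the earlier touch is unambiguous), so its in-strip probability is $1-\vartheta(b_i,a,-d_i,c|t_i,x_i)-\vartheta(d_i,c,-b_i,-a|t_i,-x_i)$, the two $\vartheta$-terms coming from $(\ref{theta})$ after matching the ``upper first'' event to the event described there and, for the ``lower first'' event, applying the reflection $B\mapsto-B$, which interchanges the two lines and sends $x_i$ to $-x_i$. The admissible range of $x_i$ is fixed by requiring that the path lie in the strip both just before and just after the $i$-th jump, giving the limits $\alpha_i$ and $\beta_i$; on that range one checks at once that $b_{i+1}>0$ and $d_{i+1}>0$, so Anderson's formulas are legitimately used at the next step.

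The final interval $[\tau_k,t)$ is treated identically except that no right endpoint is prescribed, so the bridge probability is replaced by the unconditional two-barrier exit probability; by the same disjoint-union-plus-reflection argument it contributes $1-\chi(b_{k+1},a,-d_{k+1},-c|t-\sum_{i=1}^k t_i)-\chi(d_{k+1},c,-b_{k+1},-a|t-\sum_{i=1}^k t_i)$, now taken from $(\ref{chi})$. Multiplying the $k$ bridge factors and this final factor, integrating against $\d\Phi_{t_1}(x_1)\cdots\d\Phi_{t_k}(x_k)$ over $\prod_{i=1}^k(\alpha_i,\beta_i)$, and inserting the result back into the sum over $k$ yields $(\ref{prep})$.

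I expect the main obstacle to be the careful bookkeeping of the iterated conditioning rather than any single hard estimate: one must check that, conditionally on the jump data, the Brownian segments over the successive inter-jump intervals genuinely combine as independent bridges with the stated endpoint conditioning, and that the recursions for $b_i$, $d_i$ and the limits $\alpha_i$, $\beta_i$ are propagated correctly through the product. This mirrors the proof of Theorem 2.1 but is more delicate, because both barriers are active simultaneously, so the ``upper first / lower first'' split and the accompanying reflections must be tracked at every stage; once that is in place, the remainder is the mechanical substitution of $(\ref{theta})$ and $(\ref{chi})$.
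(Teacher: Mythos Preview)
Your proposal is correct and follows essentially the same route as the paper: condition on $N_t=k$, the jump times, and the jump heights to reduce to a pure Brownian event, then iterate the Markov property over the inter-jump intervals, applying Anderson's bridge formula $(\ref{theta})$ (with the reflection $B\mapsto -B$ for the lower-first event) on each interior interval and the unconditional formula $(\ref{chi})$ on the last one. Your write-up in fact makes the ``upper first / lower first'' split and the reflection argument more explicit than the paper does, but the structure of the argument is identical.
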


\begin{proof}
Again by the independence of $(B_t)_{t\geq 0}$, $(N_t)_{t\geq 0}$ and $(\eta_i)_{i\in \N}$, we have
\begin{align}\label{pre}\notag
&\p(-(cs+d)<X_s<as +b, \tim{0}{t})\\ \notag
&=\p(\low d<B_s+\sum_{i=1}^{N_s}\eta_i<\upb b,\tim 0t)\\ \notag
&=\sum_{k=0}^{\infty} \p(N_t=k)\int_{\mathcal S_k^t} \d F_k\Big(t_1,\cdots,\sum_{i=1}^k t_i\Big)\int_{\R^k}\d G( h_1,\ldots, h_k)\p\Big(\low d<B_s<\upb b, \\
&\tim 0{t_1};\ldots;\low d<B_s+\sum_{i=1}^kh_i<\upb b, \tim{\sum_{i=1}^k t_i}{t}\Big).
\end{align}
To evaluate the term in the above integral, we deduce inductively
\begin{align}\label{bebi}\notag
&\p\Big(\low d<B_s<\upb b, \tim 0{t_1};\ldots;\low d<B_s+\sum_{i=1}^kh_i<\upb b, \tim{\sum_{i=1}^k t_i}{t}\Big)\\ \notag
&=\int_{\alpha_1}^{\beta_1}\p\big(\low d<B_s<\upb b, \tim 0{t_1}|B_{t_1}=x_1\big)\\ \notag
&\cdot \p\Big(\low d< B_s+h_1<\upb b, \tim{t_1}{t_1+t_2};\ldots;\low d<B_s\\ \notag
&+\sum_{i=1}^k h_i<\upb b, \tim{\sum_{i=1}^k t_i}{t}\big|B_{t_1}=x_1\Big)\d\Phi_{t_1}(x_1)\Big\}\\ \notag
&=\int_{\alpha_1}^{\beta_1}\d\Phi_{t_1}(x_1)\cdots\int_{\alpha_k}^{\beta_k}\d\Phi_{t_k}(x_k)
 \prod_{i=1}^{k}\p\big(\low {d_i} <B_s<\upb {b_i},\tim 0{t_i}\big|B_{t_i}=x_i\big)\\  \notag
 &\quad\cdot\p\big(\low {d_{k+1}}<B_s< \upb {b_{k+1}}, \tim 0{t-\sum_{i=1}^k t_i}\big)\\ \notag
 &=\int_{\alpha_1}^{\beta_1}\d\Phi_{t_1}(x_1)\cdots\int_{\alpha_k}^{\beta_k}\d\Phi_{t_k}(x_k)
 \prod_{i=1}^{k}\Big(1-\vartheta(b_i,a,-d_i,c\big|t_i,x_i)-\vartheta(d_i,c,-b_i,-a|t_i,-x_i)\Big)\\
 &\quad \cdot \Big(1-\chi\big(b_{k+1}, a,-d_{k+1},-c\big|t-\sum_{i=1}^k t_i\big)-\chi\big(d_{k+1},c,-b_{k+1},-a\big|t-\sum_{i=1}^k t_i\big)\Big),
\end{align}
where (\ref{theta}) and (\ref{chi}) were used in the last step. The result then follows by substituting (\ref{bebi}) into (\ref{pre}).
\end{proof}

\section{Piecewise linear and nonlinear boundaries}

In this section, we first consider process $X_t$ crossing a piecewise linear boundary. Suppose $b(s)$ is a linear function on each subinterval of the partition $0=s_0<s_1< \cdots < s_{n-1}<s_n=t$ on $[0, t]$. Since the probability that any jump time falls on some points $s_i$ is zero, without loss of generality we assume that any $k\geq 1$ realized jump times $\{u_i;\ i=1,2,...,k\}$ and points $\{s_i;\ i=1,...,n-1\}$ are distinct and therefore they form a partition $0<t_1<\cdots<t_m=t$, where $m = k+n$. Then analog to the proofs of Theorem 2.1 in section 2 and Theorem 1 of Wang and P\"otzelberger (1997) we can show the following result.

\begin{thm}[Piecewise linear BCP]
For the piecewise linear function $b(t)$ defined on the partition $0=s_0<s_1< \cdots < s_{n-1}<s_n=t$, it holds
\begin{equation}
\begin{split}
&\p(\max_{0\leq s< t}\{ X_s-b(s)\} < 0)
= \sum_{k=0}^{\infty} \p(N_t=k)\int_{\mathcal S_k^t} \d F_k(u_1,u_2,\ldots, u_k)\int_{\R^k}\d G_k(h_1,h_2,...,h_k)\\
&\quad \int_{-\infty}^{\beta_1}\cdots\int_{-\infty}^{\beta_m}\prod_{i=1}^m \big(1-e^{-2(b_{i-1}-x_{i-1})(b_i-x_i)/(t_i-t_{i-1})}\big)\\
&\quad \cdot \Big(\Phi\big(\frac{b(t)-x_m}{\sqrt{t-t_{m}}}\big) -\exp(-\frac{2(b(t)-b_m)(b_m-x_m)}{t-t_m})\Phi\big((\frac{b(t)-2b_m+x_m)}{\sqrt{t-t_m}}\big)\Big)\\
&\quad \d\Phi_{t_1}(x_1)\d\Phi_{t_2-t_1}(x_2-x_1)\ldots\d\Phi_{t_m-t_{m-1}}(x_m-x_{m-1}),
\end{split}
\end{equation}
where $b_i = b(t_i) - \sum_{j=1}^k\mathbf{1}_{\{t_i> u_j\}}h_j$ and $\beta_i=\min\{b_i, b_i-h_i\}$.
\end{thm}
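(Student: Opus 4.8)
The plan is to follow exactly the scheme used in the proof of Theorem 2.1, replacing the linear boundary $as+b$ by the piecewise linear boundary $b(s)$ and replacing the single pre-jump-time intervals by the finer partition obtained by merging jump times with the breakpoints $s_1,\dots,s_{n-1}$. First I would condition on the jump process and jump heights: by the independence of $(B_t)$, $(N_t)$ and $(\eta_i)$, write
\begin{align*}
&\p(\max_{0\le s<t}\{X_s-b(s)\}<0)\\
&=\sum_{k=0}^\infty \p(N_t=k)\int_{\mathcal S_k^t}\d F_k(u_1,\dots,u_k)\int_{\R^k}\d G_k(h_1,\dots,h_k)\\
&\quad\cdot\p\Big(\max_{0\le s<t}\big\{B_s+\textstyle\sum_{i=1}^{N_s}h_i-b(s)\big\}<0\,\Big|\,\tau_j=u_j,\ \eta_j=h_j\Big),
\end{align*}
so that the remaining task is to evaluate the conditional probability, in which $B_s+\sum_{i}h_i\mathbf 1_{\{s>u_i\}}$ stays below a piecewise linear curve. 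On each of the $m=k+n$ subintervals $[t_{i-1},t_i)$ of the merged partition, the function $s\mapsto b(s)-\sum_{j}h_j\mathbf 1_{\{s>u_j\}}$ is linear, and the path we must keep below it is simply $B_s$, whose conditional law given the endpoint values $B_{t_{i-1}}=x_{i-1}$, $B_{t_i}=x_i$ is a Brownian bridge.

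Second, I would peel off the intervals one at a time, exactly as in (2.5). Introduce $x_0=0$, set $b_i=b(t_i)-\sum_{j=1}^k\mathbf 1_{\{t_i>u_j\}}h_j$ as in the statement, and use the Brownian-bridge boundary-crossing identity
\[
\p\big(\max_{t_{i-1}\le s<t_i}\{B_s-\ell_i(s)\}\ge 0\,\big|\,B_{t_{i-1}}=x_{i-1},B_{t_i}=x_i\big)=e^{-2(b_{i-1}-x_{i-1})(b_i-x_i)/(t_i-t_{i-1})},
\]
valid on the event $x_{i-1}<b_{i-1}$, $x_i<b_i$, where $\ell_i$ is the linear piece on $[t_{i-1},t_i)$ with $\ell_i(t_{i-1})=b_{i-1}$, $\ell_i(t_i)=b_i$; this is the affine-boundary Brownian-bridge formula already quoted (in the $a,b$ form) before Theorem 2.1, after the obvious reparametrisation of the interval to $[0,t_i-t_{i-1}]$ and shifting the bridge so that it starts at $0$. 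The increments $B_{t_i}-B_{t_{i-1}}$ are independent, which accounts for the product measure $\d\Phi_{t_1}(x_1)\d\Phi_{t_2-t_1}(x_2-x_1)\cdots$ in the final formula, and the restriction $x_i<\beta_i=\min\{b_i,b_i-h_i\}$ arises because at a jump time $u_j=t_i$ one needs the path to be below the boundary both just before the jump (giving $x_i<b_i$, using the pre-jump value of $b$) and just after (giving $x_i<b_i-h_i$, i.e. $x_i$ below the post-jump boundary value). The last interval $[t_m,t)$ has no further endpoint conditioning, so there I would use instead the unconditioned linear-boundary survival probability $\p(\max_{0\le s<t-t_m}\{B_s-as-\tilde b\}\ge 0)=1-\Phi(a\sqrt{\cdot}+\tilde b/\sqrt{\cdot})+e^{-2a\tilde b}\Phi(a\sqrt{\cdot}-\tilde b/\sqrt{\cdot})$, again as recalled before Theorem 2.1, with the slope and intercept read off from the final linear piece; writing its complement in the $\Phi((b(t)-x_m)/\sqrt{t-t_m})$ form of the statement is a routine algebraic rewriting.

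Third, iterating the peeling over $i=1,\dots,m$ produces the product $\prod_{i=1}^m(1-e^{-2(b_{i-1}-x_{i-1})(b_i-x_i)/(t_i-t_{i-1})})$ times the last-interval factor, integrated against $\prod\d\Phi_{t_i-t_{i-1}}(x_i-x_{i-1})$ over the region $\{x_i<\beta_i\}$; substituting this back into the conditioning identity of the first step gives the claimed formula. The main obstacle, and the place I would be most careful, is the bookkeeping at the jump epochs: one must check that when a breakpoint $s_j$ and a jump time $u_\ell$ interlace in the merged partition, the definition $b_i=b(t_i)-\sum_{j}\mathbf 1_{\{t_i>u_j\}}h_j$ correctly tracks the running boundary for the shifted path $B_s$ (so that the "effective boundary" is continuous across breakpoints but jumps by $-h_\ell$ across each $u_\ell$), and that the truncation $\beta_i$ is the right one on each interval — in particular that an interval whose right endpoint is a pure breakpoint contributes $\beta_i=b_i$ (no $h$ term) while one whose right endpoint is a jump time contributes the genuine minimum. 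Everything else is a direct transcription of the argument in Theorem 2.1 combined with Theorem 1 of Wang and P\"otzelberger (1997), together with the elementary observation that the probability of a jump time coinciding with any $s_j$ is zero, which justifies working on the merged partition.
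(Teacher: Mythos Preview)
Your proposal is correct and follows precisely the route the paper indicates: the paper gives no detailed proof of this theorem, stating only that it follows ``analog to the proofs of Theorem 2.1 in section 2 and Theorem 1 of Wang and P\"otzelberger (1997)'', and your argument is exactly a careful execution of that combination---conditioning on the jump data as in (2.4), merging the jump times with the breakpoints, and then peeling off linear pieces via the Brownian-bridge crossing identity as in Wang--P\"otzelberger. Your attention to the bookkeeping at jump epochs (in particular that $\beta_i$ should really depend on whether $t_i$ is a jump time or a pure breakpoint) is well placed and in fact clarifies a point the paper leaves implicit.
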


Now we consider the BCP of process $X_t$ crossing a general boundary $b(t)$. We first write
\[
\p(\max_{0\leq s< t} \{X_s - b(s)\}< 0) = \E[\E(\max_{0\leq s< t} \{X_s - b(s)\}< 0 | \mathcal F_J^t)],
\]
where
\[
\mathcal F_J^t=\sigma\big(N_s,\,0\leq s<t, \tau_k, \eta(\tau_k), k\geq 1\big).
\]
For any $n\geq 1$ and a partition $0<s_1<s_2<\cdots<s_n=t$, let $b_n(s)$ be the piecewise linear function connecting the points $b(s_i), i=1,2,...,n$ and suppose
\[
\max_{0\leq s\leq t}\abs{b_n(s) - b(s)}\to\ 0, \mbox{ as } n\to\infty.
\]
Then by the continuity property of probability measure,
\[
\begin{split}
\lim_{n\to\infty}\p(\max_{0\leq s< t} \{X_s - b_n(s)\}< 0)
&= \E[\lim_{n\to\infty}\E(\max_{0\leq s< t} \{X_s -  b_n(s)\}< 0 | \mathcal F_J^t)]\\
&= \E[\E(\max_{0\leq s< t} \{X_s - b(s)\}< 0 | \mathcal F_J^t)] \\
&= \p(\max_{0\leq s< t} \{X_s - b(s)\}< 0).
\end{split}
\]
Furthermore, by the results of P\"otzelberger and Wang (2001) and Borovkov and Novikov (2005), with probability one, we have
\[
\p(\max_{0\leq s< t} \{X_s - b_n(s)\}< 0 | \mathcal F_J^t) - \p(\max_{0\leq s< t} \{X_s - b(s)\}< 0 | \mathcal F_J^t) = O(\frac{1}{n^2}),
\]
which implies that
\begin{equation}
\p(\max_{0\leq s< t} \{X_s - b_n(s)\}< 0) - \p(\max_{0\leq s< t} \{X_s - b(s)\}< 0) = O(\frac{1}{n^2}).
\end{equation}

Hence $(3.1)$ can be used to approximate the BCP for $X_t$ and general boundary $b(t)$.

\section{Numerical computation and examples}

In the previous sections, we provided various formulas for the BCP of jump-diffusion processes crossing linear or piecewise linear boundaries in the form of multiple integrals. Since these integrals are expectations of the integrants with respect to the probability distributions of jump process $N_t, \eta_i$ and the multivariate normal variates $B_{t_i}$, they can be evaluated straightforwardly using the Monte Carlo integration method. As has been shown in Wang and P\"otzelberger (1997) and P\"otzelberger and Wang (2001), this method is very stable and produces fairly accurate results. In the following we illustrate this method using some examples.

Specifically, we consider the Poisson jump process with rates $\lambda = 3$ and $\lambda = 0.01$ to approximate the no-jump scenario, and three different jump hight distributions:

(1) double exponential (DE) distribution with density
\begin{equation}
g(y)=p\cdot\eta_1e^{-\eta_1y}1_{\{y\geq0\}}+(1-p)\cdot\eta_2e^{\eta_2y}1_{\{y<0\}}
\end{equation}
and parameter values $p=0.5$, $\eta_1=1/0.10$ and $\eta_2=1/0.15$;

(2) exponential (Exp) distribution with mean $\lambda = 0.15$; and

(3) Bernoulli (Ber) distribution
\begin{equation}
g(y)=p\cdot1_{\{y=0.15\}}+(1-p)\cdot1_{\{y=-0.15\}}
\end{equation}
with $p=0.5$.

We calculate one-sided BCP for one constant $(b(t) =1)$, two linear $(b(t) =\pm 0.5t+1.5)$ and three nonlinear boundaries $(b(t) =1+t^2, \sqrt{1+t}, \exp(-t))$. For nonlinear boundaries, the approximating piecewise linear functions are based on $n=32$ equally spaced points in the interval $[0,t]$. The cases of two linear boundaries are also considered by Kou and Wang (2003).

We use the following procedure to compute the BCP: 1) generate the number of jumps $N_t=k$ from the Poisson($\lambda t$); 2) generate $k$ uniformly distributed jump times $u_1, u_2, ..., u_k$ in the interval $[0,t]$ and the jump heights $h_1, h_2, ..., h_k$ from the above distribution $G_k$; 3) generate the random sample from multivariate normal distribution of $B_{t_1}, B_{t_2}, ..., B_{t_m}$; 4) evaluate the integrant in (3.1) say; 5) repeat the above steps $N$ times and calculate the Monte Carlo sample mean and standard deviation of the estimated integrant.

In all cases, $t=1$ and the Monte Carlo replications are $N=200000$. For comparison, we also include the case with $\lambda = 0$. All numerical computations are carried out using computer package Matlab or R. R functions are available upon request.

The results for linear boundaries are given in Table 1, while those for nonlinear boundaries are in Table 2, where the Monte Carlo simulation standard errors are given in parentheses. The numerical results for constant and linear boundaries are consistent with those in the literature (Kou and Wang 2003).

\begin{table}[h!]
\caption{BCP for linear boundaries with simulation standard errors in parentheses.}
\begin{center}
\begin{tabular}{@{}llccc@{}}
\toprule
                &                           & $\lambda=0$        & $\lambda=0.01$& $\lambda=3$     \\
\midrule

$b(t)=1$        &        DE           & 0.682826           & 0.682841      & 0.685138                \\
                &                           & (0.000811)         & (0.000812)    & (0.000915)              \\
                &              Exp          & 0.682366           & 0.679808      & 0.106362                \\
                &                           & (0.000812)         & (0.000816)    & (0.000639)              \\
                &              Ber            & 0.682884           & 0.682599      & 0.667722                \\
                &                           & (0.000812)         & (0.000812)    & (0.000931)              \\
\midrule
$b(t)=0.5t+1.5$ &    DE           & 0.941229           & 0.942003      & 0.938798                \\
                &                           & (0.000404)         & (0.000401)    & (0.000466)              \\
                &              Exp          & 0.941859           & 0.938322      & 0.214821                \\
                &                           & (0.000401)         & (0.000420)    & (0.000885)              \\
                &              Ber            & 0.941156           & 0.941872      & 0.933017                \\
                &                           & (0.000405)         & (0.000402)    & (0.000487)              \\
\midrule
$b(t)=-0.5t+1.5$&    DE           & 0.740624           & 0.739806      & 0.742868                \\
                &                           & (0.000827)         & (0.000829)    & (0.000887)              \\
                &              Exp          & 0.738117           & 0.735223      & 0.120888                \\
                &                           & (0.000830)         & (0.000836)    & (0.000689)              \\
                &              Ber            & 0.739535           & 0.738522      & 0.727180                \\
                &                           & (0.000829)         & (0.000830)    & (0.000901)              \\
\bottomrule
\end{tabular}
\end{center}
\end{table}

\begin{table}[h!]
\caption{BCP for nonlinear boundaries with simulation standard errors given in parentheses.}
\begin{center}
\begin{tabular}{@{}llccc@{}}
\toprule
                &                           & $\lambda=0$        & $\lambda=0.01$ & $\lambda=3$    \\
\midrule

$b(t)=1+t^2$    &      DE           & 0.850851           & 0.852495      & 0.845398                \\
                &                           & (0.000766)         & (0.000762)    & (0.000779)              \\
                &              Exp          & 0.851666           & 0.847987      & 0.167030                \\
                &                           & (0.000763)         & (0.000772)    & (0.000823)              \\
                &              Ber        & 0.851581           & 0.852696      & 0.836923                \\
                &                           & (0.000764)         & (0.000762)    & (0.000796)              \\
\midrule
$b(t)=\sqrt{1+t}$&    DE           & 0.803963           & 0.803150      & 0.801449                \\
                &                           & (0.000859)         & (0.000860)    & (0.000865)              \\
                &              Exp          & 0.804617           & 0.797564      & 0.141257                \\
                &                           & (0.000857)         & (0.000870)    & (0.000768)              \\
                &              Ber       & 0.803450           & 0.803659      & 0.788627                \\
                &                           & (0.000860)         & (0.000859)    & (0.000885)              \\
\midrule
$b(t)=exp(-t)$  &      DE           & 0.439138           & 0.439502      & 0.454230                \\
                &                           & (0.001079)         & (0.001079)    & (0.001084)              \\
                &              Exp          & 0.439860           & 0.434360      & 0.056572                \\
                &                           & (0.001079)         & (0.001078)    & (0.000506)              \\
                &              Ber        & 0.437854           & 0.437174      & 0.430308                \\
                &                           & (0.001078)         & (0.001078)    & (0.001078)              \\

\bottomrule
\end{tabular}
\end{center}
\end{table}

\section{Conclusions}

We derived explicit formulas for probabilities of Brownian motion with jumps crossing linear or piecewise linear boundaries in any finite interval. These formulas can be used to approximate the boundary crossing probabilities for jump-diffusions and general nonlinear boundaries. Unlike the FPT density method based on Laplace transform, our method does not reply on particular distributions for the jump components. In fact, the jump process can be any integer-valued process and jump sizes can have general distributions. Moreover, the jump sizes can be even correlated and/or non-identically distributed. Therefore this method is much more flexible and has much wider applications. The numerical calculation is straightforward and easy to implement.

Finally we note that using the transformation method of Wang and P\"otzelberger (2007), it is possible to obtain the BCP for more general jump-diffusion processes of the form $X_t = Y_t + \sum_{i=1}^{N_t} \eta_i$, where $Y_t$ is a diffusion that can be expressed as a functional transformation of a BM.


\section*{Acknowledgment}

The authors thank Mr. Zhiyong Jin for his assistance in the initial calculation of numerical examples. Financial support from the Natural Sciences and Engineering Research Council of Canada (NSERC) is gratefully acknowledged. The first author's research is also supported by FANEDD (No. 200917), 985-project and Fundamental Research Funds for the Central Universities (China).



\begin{thebibliography}{99fe}

\bibitem{An}
Anderson, T. W. (1960). A modification of the sequential probability ratio test to reduce the sample size. \emph{Ann. Math. Statist.}, 31, 165-197.

\bibitem{}
Bakshi, G. and Panayotov, G. (2010). First-passage probability, jump models, and intra-horizon risk. \emph{Journal of Financial Economics}, 95, 20-40.

\bibitem{}
Borovkov, K. and Novikov, A. (2005). Explicit bounds for approximation rates of boundary crossing probabilities for the Wiener process. \emph{J. Appl. Probab.} 42, 82-92.

\bibitem{}
Cai, N., Chen, N. and Wan, X. (2009). Pricing double-barrier options under a flexible jump diffusion model. \emph{Operation Research Letters}, 37, 163-167.

\bibitem{}
Chi, Y. and Lin, X. S. (2011). On the threshold dividend strategy for a generalized jump-diffusion risk model. \emph{Insurance: Mathematics and Economics} 48, in press.

\bibitem{}
Chiarella, C. and Ziogas, A. (2009). American call option under jump-diffusion processes - a Fourier transform approach. \emph{Applied Mathematical Finance}, 16, 37-79.

\bibitem{}
Cont, R. and Tankov, P. (2004). \emph{Financial Modelling with Jump Processes}. Chapman \& Hill/CRC, Boca Raton.

\bibitem{}
Daniels, H. E. (1969). The minimum of stationary Markov process
superimposed on a U-shaped trend. \emph{J. Appl. Prob.} 6,
399-408.

\bibitem{}
Daniels, H. E. (1996). Approximating the first crossing-time
density for a curved boundary. \textit{Bernoulli} 2, 133-143.

\bibitem{}
Deelstra, G. (1994). Boundary crossing result for brownian motion. \emph{Bl\"atter der DGVFM}, Volume 21, Number 4, 449-456.

\bibitem{}
Dong, Y., Wang, G. and Wu, R. (2011). Pricing the zero-coupon bond and its fair premium under a structural credit risk model with jumps. \emph{J. Appl. Prob.}, 48, 404-419.

\bibitem{}
Downes, A. N. and Borovkov, K. (2008). First Passage Densities and Boundary Crossing Probabilities for Diffusion Processes. \emph{Methodology and Computing in Applied Probability}, 10, 621-644.

\bibitem{}
Durbin, J. (1971). Boundary crossing probabilities for the
Brownian motion and the poisson processes and techniques for
computing the power of the Kolmogorov-Smirnov test. \emph{J. Appl.
Prob.} 8, 431-453.

\bibitem{}
Durbin, J. (1992). The first-passage density of the Brownian
motion process to a curved boundary. \textit{J. Appl. Prob.} 29,
291-304.

\bibitem{}
Ferebee, B. (1982). The tangent approximation to one-sided Brownian
exit densities. \emph{Z. Wahrscheinlichkeitsth.}, 61, 309-326.

\bibitem{}
Ferebee, B. (1983). An asymptotic expansion for one-sided Brownian
exit densities. \emph{Z. Wahrscheinlichkeitsth.}, 63, 1-15.

\bibitem{}
Fu, J. C. and Wu, T.-L. (2010). Linear and nonlinear boundary crossing probabilities for Brownian motion and related processes. \emph{J. Appl. Prob.}, 47, 1058-1071.

\bibitem{}
Giorno, V., Nobile, A. G., Ricciardi, L. M., and Sato, S. (1989).
On the evaluation of first-passage-time probability densities via
non-singular integral equations. \emph{Adv. Appl. Prob.}, 21,
20-36.

\bibitem{}
Giraudo, M. T. (2009). An approximation formula for the first-crossing-time density of a Wiener process perturbed by random jumps. \emph{Statistics and Probability Letters}, 79, 1559-1567.

\bibitem{}
Jeannin, M. and Pistorius, M. (2010). A transform approach to calculate prices and greeks of barrier options driven by a class of Levy processes.  \emph{Quantitative Fiance}, 10, 629-644.

\bibitem{}
Jiang, Z. and Pistorius, M. (2008). On perpetual American put valuation and first-passage in a regime-switching model with jumps. \emph{Fiance and Stochastics}, 12, 331-355.

\bibitem{}
Kou, S. G. and Wang, H. (2003). First passage times of a jump diffusion process. \emph{Adv. Appl. Prob.} 35, 504-531. \emph{Fiance and Stochastics}, 13, 531-562.

\bibitem{}
Kou, S. G. and Wang, H. (2004). Option pricing under a double exponential jump diffusion model. \emph{Adv. Appl. Prob.} 35, 504-531. \emph{Management Science}, 50, 1178-1192.

\bibitem{}
Kudryavtsev, O. and Levendorskii, S. (2009). Fast and accurate pricing of barrier options under Levy processes.

\bibitem{}
Novikov, A., Frishling, V. and Kordzakhia, N. (1999).
Approximations of boundary crossing probabilities for a Brownian
motion. \emph{J. Appl. Prob.} 99, 1019-1030.

\bibitem{}
Novikov, A., Frishling, V. and Kordzakhia, N. (2003).
Time-dependent barrier options and boundary crossing
probabilities. \emph{Georgian Math. J.} 10, 325-334.

\bibitem{}
Perry, D., Stadje, W. and Zacks, S. (2004). The first rendezvous time of Brownian motion and compound Poisson-type processes. \emph{J. Appl. Prob.}, 41, 1059-1070.

\bibitem{}
P\"{o}tzelberger, K. and Wang, L. (2001). Boundary crossing probability for Brownian Motion. \emph{J. Appl. Prob.} 38, 152-164.

\bibitem{}
Ricciardi, L. M., Sacerdote, L. and Sato, S. (1984). On an
integral equation for first-passage-time probability densities.
\emph{J. Appl. Prob.} 21, 302-314.

\bibitem{}
Sacerdote, L. and Tomassetti, F. (1996). On evaluations and
asymptotic approximations of first-passage-time probabilities.
\emph{Adv. Appl. Prob.} 28, 270--284.

\bibitem{}
Siegmund, D. (1986). Boundary crossing probabilities and statistical applications. \emph{Ann. Statist.} 14, 361-404.

\bibitem{}
Strassen, V. (1967). Almost sure behaviour of sums of independent
random variables and martingales. In \emph{Proc. 5th Berkley Symp.
Math. Statist. Prob., Vol. 11, Contributions to Probability
Theory}, Part 1, eds. L. M. Le Cam and J Neyman, University of
California Press, Berkley, 315-343.

\bibitem{}
Wang, L. and P\"{o}tzelberger, K. (1997). Boundary crossing probability for Brownian Motion and general boundaries. \emph{J. Appl. Prob.} 34, 54-65.

\bibitem{}
Wang, L. and P\"{o}tzelberger, K. (2007). Crossing probabilities for diffusion processes with piecewise continuous boundaries. \emph{Methodol. Comput. Appl. Probab.} 9, 21-40.

\end{thebibliography}
\end{document}